\documentclass[12pt]{elsarticle}

\usepackage[nottoc,notlot,notlof]{tocbibind}

\usepackage{amsfonts,amssymb,amsmath}
\usepackage{lineno,hyperref}

\usepackage{indentfirst}

\setlength{\parskip}{0.5\baselineskip} 

\makeatletter
\def\ps@pprintTitle{%
   \let\@oddhead\@empty
   \let\@evenhead\@empty
   \let\@oddfoot\@empty
   \let\@evenfoot\@oddfoot
}
\makeatother

\newtheorem{thm}{Theorem}[section]

\newtheorem{lem}[thm]{Lemma}
\newtheorem{prop}[thm]{Proposition}
\newtheorem{defn}[thm]{Definition}
\newtheorem{rem}[thm]{Remark}
\newtheorem{ex}{Example}

\newenvironment{proof}[1][\noindent \textbf{Proof: }]{#1}{ \hfill $\square$ \vspace{2mm}}

\bibliographystyle{elsarticle-num}

\begin{document}

\begin{frontmatter}

\title{Global hypoellipticity for a class of\\ periodic Cauchy operators}

\author{Fernando de \'{A}vila Silva}
\ead{fernando.avila@ufpr.br}

\address{Departamento de Matem\'{a}tica, Universidade Federal do Paran\'{a}, \\ Caixa Postal 19081, Curitiba, PR 81531-990, Brazil}

\begin{abstract}

This note presents an investigation on the  global hypoellipticity problem for Cauchy operators on $\mathbb{T}^{n+1}$ belonging to the class  \linebreak
$L = \prod_{j=1}^{m}\left(D_t + c_j(t) P_j(D_x)\right)$, where $P_j(D_x)$ is a pseudo-differential operator on $\mathbb{T}^n$ and $c_j = c_j(t)$, a smooth, complex valued  function on $\mathbb{T}$. The main goal of this investigation consists in establishing connections between the global hypoellipticity of the operators $L$ and its normal form $L_0 =  \prod_{j=1}^m \left( D_t + c_{0,j}P_j(D_x)\right)$. In order to do so, the problem is approached   by combining H\"{o}rmander's and Siegel's  conditions on the symbols of the operators $L_j = D_t + c_j(t) P_j(D_x)$.

\end{abstract}

\begin{keyword}
Global hypoellipticity, Pseudo-differential operators,  Fourier series, Cauchy operators, Siegel  conditions
\MSC[2010] 35B10 35B65 35H10 35S05
\end{keyword}

\end{frontmatter}

\section{Introduction}

This note discusses  the  global hypoellipticity problem for operators 
belonging to the class
\begin{equation}\label{Main-operator}
L  = \prod_{j=1}^{m}\left(D_t + c_j(t) P_j(D_x)\right), \ (t, x) \in \mathbb{T} \times \mathbb{T}^n,
\end{equation}
where $\mathbb{T} \simeq \mathbb{R}/ 2 \pi\mathbb{Z}$ is the  Torus and  $P_j(D_x)$ is a pseudo-differential operator of order $\nu_j \in \mathbb{R}$ defined by 
$$
P_j(D_x) u(x) =  \sum_{\xi \in \mathbb{Z}^n}{e^{i x \cdot \xi} p_j(\xi) \widehat{u}(\xi)},
$$
where its  symbol $p_j=p_j(\xi)$  satisfies
\begin{equation}\label{bound-symb}
|p_j(\xi)| \leq C |\xi|^{\nu_j}, \ \forall \xi \in \mathbb{Z}^{n}
\end{equation}
and $\widehat{u}(\xi)$ are the Fourier coefficients
$$
\widehat{u}(\xi) = (2\pi)^{-n} \int_{\mathbb{T}^n}{e^{- i x \cdot \xi} u(x) dx}, \ \xi \in \mathbb{Z}^n.
$$

In particular, $p_j$ belongs to the class $S^{\nu_j}(\mathbb{Z}^n)$,  as shown in Ruzhansky and Turunen \cite{RT3}. 

Let us recall that a pseudo-differential operator $\mathcal{P}$ is globally hypoellitpic ((GH) for short) on the $N$-dimensional torus  $\mathbb{T}^{N}$   if conditions $u \in \mathcal{D}'(\mathbb{T}^{N})$  and $\mathcal{P} u  \in C^{\infty}(\mathbb{T}^{N})$ imply $u \in C^{\infty}(\mathbb{T}^{N})$.

We emphasize that the study of global hypoellipticity is a highly non-trivial problem
and it seems impossible to attack it by a unified approach, even for the classes of vector fields, as we can see in A. Bergamasco, P. Cordaro and G. Petronilho \cite{BCP04}, G. Petronilho \cite{Petr11}, A. Himonas and G. Petronilho \cite{HIMONASGER} and J. Hounie \cite{HOU79}. Also, the study is considered in different functional spaces, for instance,  analytic and Gevrey classes, see A. Bergamasco \cite{BERG99}, A. Albanese \cite{ALBANESE2004},  G. Chinni, P. Cordaro  \cite{C-CHINI}  and the references therein.

Furthermore, the problem is analyzed in the class of pseudo-differential operators as proposed by D. Dickson, T. Gramchev and M. Yoshino  in \cite{DGY02} where 
the authors characterizes the global hypoellipticity by means of Siegel type conditions on the symbols of these operators.

Now, since the main goal of this investigation is to establish connections between the global hypoellipticity of  $L$ and its normal form
$$
L_0 =  \prod_{j=1}^m \left( D_t + c_{0,j}P_j(D_x)\right), \ (t, x) \in \mathbb{T}^{n+1}, 
$$
where $c_{0,j} = (2 \pi)^{-1} \int_{0}^{2 \pi}c_{j}(t)dt$, first we take inspiration
from \cite{AGKM}, where R. Gonzalez, A.  Kirilov, C. Medeira and F. {\'A}vila, present a characterization of the  global hypoellipticity for operators in the form
$$
L_j = D_t+c_j(t)P_j(D_x),  (t, x) \in \mathbb{T}^{n+1},
$$
by analyzing the functions 
$$
t\in\mathbb{T}\mapsto \mathcal{M}_j(t,\xi)\doteq c_j(t)p_j(\xi), \ \xi \in \mathbb{Z}^n,
$$
and its averages
$$
{\mathcal{M}}_{0,j}(\xi) = (2 \pi)^{-1} \int_{0}^{2 \pi} {\mathcal{M}_j}(t, \xi) dt, \ \xi \in \mathbb{Z}^n.
$$

Secondly, by denoting 
$Z_{{\mathcal{M}}_j} = \{\xi \in \mathbb{Z}^n; \, {\mathcal{M}}_{0,j}(\xi) \in\mathbb{Z} \}$ and $L_{0,j} = D_t + c_{0,j}P_j(D_x),$ then the  results from \cite{AGKM}, that  relate to this note,  can be summarized by the following:

\begin{thm}\label{AGKM} Set $c_j(t) = a_j(t) + i b_j(t)$ and 
	$p_j(\xi) = \alpha_j(\xi) + i \beta_j(\xi)$.
	
	\begin{enumerate}
		\item [(a)] if $L_j$ is globally hypoelliptic, then  $Z_{\mathcal{M}_j}$ is finite and $L_{0,j}$ is globally hypoelliptic;

		\item [(b)] if $L_{0,j}$ is  globally hypoelliptic  and the functions 
		$$
		\mathbb{T} \ni t \mapsto \Im\mathcal{M}_j(t,\xi) = a_j(t)\beta_j(\xi) + b_j(t)\alpha_j(\xi) 
		$$
		do not change sign, for a sufficiently large $|\xi|$, then $L_j$ is (GH).
		
		\item [(c)]  the following statements are equivalent:
		\begin{itemize}
			\item[i)] $L_{0,j}$  is globally hypoelliptic;
			\item[ii)]  there exist positive constants $C_j$, $M_j$ and $R_j$ such that
			\[|\tau+\mathcal{M}_{0,j}(\xi)|\geqslant C_j|(\tau,\xi)|^{-M_j}, \ \textrm{for all} \ |\tau| + |\xi| \geqslant R_j;\]
			\item[iii)] there exist positive constants $\widetilde{C}_j$, $\widetilde{M}_j$ and $\widetilde{R}_j$ such that
			\[|1-e^{\pm2\pi i\mathcal{M}_{0,j}(\xi)}|\geqslant \widetilde{C}_j|\xi|^{-\widetilde{M}_j}, \ \textrm{for all} \  |\xi|\geqslant \widetilde{R}_j.\]
		\end{itemize}
		
	\end{enumerate} 
	
\end{thm}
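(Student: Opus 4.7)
The plan is to reduce everything to a family of ODEs in $t$ through partial Fourier expansion in $x$. For $u(t,x) = \sum_{\xi\in\mathbb{Z}^n}\hat u(t,\xi)\,e^{ix\cdot\xi}$, the equation $L_j u = f$ decouples into
\begin{equation*}
(D_t + \mathcal{M}_j(t,\xi))\hat u(t,\xi) = \hat f(t,\xi), \qquad \xi\in\mathbb{Z}^n,
\end{equation*}
and (GH) translates into the statement that rapid decay of $\hat f(\cdot,\xi)$ in $|\xi|$, uniformly in $t$, must force the same for $\hat u(\cdot,\xi)$ together with its $t$-derivatives. The central object is the integrating factor
\begin{equation*}
\Phi_j(t,\xi) = \exp\!\Bigl(-i\int_0^t \mathcal{M}_j(s,\xi)\,ds\Bigr),
\end{equation*}
which is $2\pi$-periodic in $t$ exactly when $\xi\in Z_{\mathcal{M}_j}$ and whose modulus is governed by $\int_0^t \Im\mathcal{M}_j(s,\xi)\,ds$.

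For (a), I would argue by contraposition. If $Z_{\mathcal{M}_j}$ were infinite, choose $|\xi_k|\to\infty$ with $\mathcal{M}_{0,j}(\xi_k)\in\mathbb{Z}$; then $\Phi_j(\cdot,\xi_k)$ is a smooth $2\pi$-periodic solution of the homogeneous ODE, and
\begin{equation*}
u(t,x) = \sum_{k} \Phi_j(t,\xi_k)\,e^{ix\cdot\xi_k}
\end{equation*}
defines a non-smooth element of $\ker L_j$, contradicting (GH). Once $Z_{\mathcal{M}_j}$ is known to be finite, (GH) of $L_j$ forces (GH) of $L_{0,j}$ by transplantation: given a putative singular solution $v$ of $L_{0,j} v = g \in C^\infty$, multiplying its Fourier coefficients (outside the finitely many resonant $\xi$) by $\Phi_j(t,\xi)$ yields a singular solution of $L_j$ whose right-hand side is smooth.

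For (b), the strategy is the conjugation $\hat u(t,\xi) = \Phi_j(t,\xi)\hat w(t,\xi)$, which turns the ODE family for $L_j$ into one governed by $L_{0,j}$ up to the multiplicative factors $\Phi_j^{\pm 1}$. The sign-preservation of $\Im\mathcal{M}_j(\cdot,\xi)$ for large $|\xi|$ ensures that the integrand in the exponent is monotone in $t$, so that $|\Phi_j(t,\xi)^{\pm 1}|$ stays controlled by $\exp(2\pi|\Im\mathcal{M}_{0,j}(\xi)|)$; combined with the Siegel-type information on $\mathcal{M}_{0,j}$ furnished by (GH) of $L_{0,j}$ through (c), this bound is polynomial in $|\xi|$, and the conjugation transports Fourier decay from $L_{0,j}$ to $L_j$.

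For (c), the equivalence ii) $\Leftrightarrow$ iii) follows from the classical inequality $c\,\mathrm{dist}(\zeta,\mathbb{Z}) \leq |1-e^{2\pi i\zeta}| \leq C\,\mathrm{dist}(\zeta,\mathbb{Z})$ (for bounded $\Im\zeta$), combined with $\min_{\tau\in\mathbb{Z}}|\tau + \mathcal{M}_{0,j}(\xi)| = \mathrm{dist}(\mathcal{M}_{0,j}(\xi),\mathbb{Z})$. For i) $\Leftrightarrow$ ii) one passes to the full Fourier expansion on $\mathbb{T}^{n+1}$: the equation $L_{0,j} u = f$ becomes $(\tau+\mathcal{M}_{0,j}(\xi))\hat u(\tau,\xi) = \hat f(\tau,\xi)$, so ii) converts rapid decay of $\hat f$ into rapid decay of $\hat u$, while a violation of ii) along a sequence $(\tau_k,\xi_k)$ lets one build $u=\sum_k e^{i(t\tau_k+x\cdot\xi_k)}/(\tau_k+\mathcal{M}_{0,j}(\xi_k))$, a non-smooth distribution whose image under $L_{0,j}$ is smooth. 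The main obstacle I anticipate lies in (b): making precise how sign-preservation of $\Im\mathcal{M}_j$ together with the Siegel-type bounds from (c) yields the tame control of $\Phi_j^{\pm 1}$ in $\xi$, since without sign-preservation the exponent would oscillate in $t$ and no polynomial majoration of the conjugation would be available.
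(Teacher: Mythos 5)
Theorem \ref{AGKM} is imported from \cite{AGKM} and is not proved in the present paper, so there is no in-text argument to compare against; the remarks below concern the internal soundness of your plan. The principal error is in (b), where you claim that the Siegel conditions in (c) make $\exp\left(2\pi|\Im\mathcal{M}_{0,j}(\xi)|\right)$ polynomially bounded in $|\xi|$, so that conjugation by $\Phi_j^{\pm 1}$ is tame. This is false: once $|\Im\mathcal{M}_{0,j}(\xi)|\geq 1$, say, both $|1-e^{\pm 2\pi i\mathcal{M}_{0,j}(\xi)}|$ are bounded below by a positive absolute constant and the Siegel inequalities hold trivially, so (c) imposes no upper bound whatsoever on $|\Im\mathcal{M}_{0,j}(\xi)|$; one may have $\Im\mathcal{M}_{0,j}(\xi)\asymp |\xi|^{1/2}$ with i)--iii) all holding. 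The mechanism that actually works, visible in the paper's own use of \eqref{Sol-4} and the remark following the proof of Theorem \ref{Th-GH-H}, is to choose, for each $\xi$, whichever of the two mutually equivalent periodic solution formulas (the forward one with prefactor $(e^{2\pi i\mathcal{M}_{0,j}(\xi)}-1)^{-1}$ and kernel $\exp\left(i\int_t^{t+s}\mathcal{M}_j\right)$, or the backward one with prefactor $(1-e^{-2\pi i\mathcal{M}_{0,j}(\xi)})^{-1}$) makes the exponential kernel of modulus $\leq 1$ thanks to the assumed sign of $\Im\mathcal{M}_j(\cdot,\xi)$. Then only the prefactor must be estimated, and that is precisely what the $\pm$ pair in (c)(iii) provides; no polynomial bound on $\Phi_j^{\pm 1}$ in $\xi$ is available or needed.

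There is also an inversion in your proof of i) $\Rightarrow$ ii) in (c). When ii) fails one has $(\tau_k,\xi_k)$ with $|\tau_k+\mathcal{M}_{0,j}(\xi_k)|<|(\tau_k,\xi_k)|^{-k}$, so the coefficients $(\tau_k+\mathcal{M}_{0,j}(\xi_k))^{-1}$ of your candidate $u$ grow faster than any polynomial, meaning $u$ is not even a distribution, while $L_{0,j}u$ would have coefficients equal to $1$ on the support and hence would not be smooth. The construction must go the other way: take $u=\sum_k e^{i(t\tau_k+x\cdot\xi_k)}$, a bounded-coefficient distribution that is not smooth, so that $L_{0,j}u=\sum_k(\tau_k+\mathcal{M}_{0,j}(\xi_k))e^{i(t\tau_k+x\cdot\xi_k)}$ has rapidly decreasing coefficients and is therefore smooth. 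A related omission occurs in (a): the series $\sum_k\Phi_j(t,\xi_k)e^{ix\cdot\xi_k}$ has coefficients only controlled by $\exp\left(C|\xi_k|^{\nu_j}\right)$, so one must normalize each term (for instance by $\max_t|\Phi_j(t,\xi_k)|$) before the sum is guaranteed to converge in $\mathcal{D}'(\mathbb{T}^{n+1})$ while remaining non-smooth.
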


In view of the main goal, the first step in our investigations is the study of 
the constant coefficients operators belonging to the class:
$$
L =  \prod_{j=1}^m \left( D_t + \alpha_j P_j(D_x)\right), \ (t, x) \in \mathbb{T}^{n+1}, \ \alpha_j \in \mathbb{C}.
$$

In Section \ref{section2}, we present an analysis for this class that combines Theorem \ref{AGKM} and a generalization of Siegel's condition. This condition was introduced by D. Dickson, T. Gramchev and M. Yoshino in \cite{DGY02}. As it shall be stated in Theorem  \ref{main-the-constant}, necessary and sufficient conditions for global hypoellipticity are presented  by analyzing the behavior of the symbol $L(\tau, \xi)$.

In section \ref{section3}, the case of variable coefficients is studied.
A first investigation is performed through  Theorem \ref{AGKM} and also by assuming the following commutative condition:
$$
[L_j, L_k] = L_j  L_k - L_k  L_j = 0, \ j, k \in \{ 1, \ldots m\},
$$
that will be shown in  Theorem \ref{theorem-vari-1}.

In section \ref{section3.1}, a second investigation is presented and the approach is motivated by a condition proposed in H\"{o}rmander \cite{Hormander3}: there exists positive constants $\eta_j$ such that
\begin{equation}\label{H-hypo}
\Im \mathcal{M}_j(t, \xi) \geq -\eta_j, \ t \in \mathbb{T}, \  \forall \xi \in \mathbb{Z}^n.
\end{equation}

Under this hypothesis, in  Theorem \ref{Th-GH-H}  necessary and sufficient conditions for the global hypoellipticity of the operator $L_j$ will be exhibited.
Moreover, in Theorem \ref{Theorem8}, connections between the hypoellipticity of operators $L$ and $L_0$ will be established. Also, in Theorem \ref{Th-Solv}
conditions for the global  solvability of the operator $L_j$ will be presented and, in particular, obstructions for the global hypoellipticity of operator $L$ will be shown in Theorem \ref{suf-non-hy}.

Furthermore, also in section \ref{section3.1}, an interesting fact shall present itself: in view of  condition \eqref{H-hypo}, it becomes possible to obtain a new class of globally hypoelliptic  operators, so that the functions $\Im\mathcal{M}_j(\cdot,\xi)$ change sign for infinitely many indexes $\xi \in \mathbb{Z}^n$. This fact will be discussed at the beginning of the section, where an example will also be proposed.

\section{Operators with constant coefficients \label{section2}}

In this section we consider the constant coefficient  operator 
\begin{equation}\label{cc}
L =  \prod_{j=1}^m \left( D_t + \alpha_j P_j(D_x)\right), \ (t, x) \in \mathbb{T}^{n+1}, \ \alpha_j \in \mathbb{C},
\end{equation}
and study the global hypoellipticity by analyzing the growth of its symbol
\begin{equation*}
L(\tau, \xi) =   \prod_{j=1}^{m}(\tau - \alpha_j p_j(\xi)), \ (\tau, \xi) \in \mathbb{Z} \times \mathbb{Z}^n.
\end{equation*}

By following the approach introduced by Greenfield and Wallach in \cite{GW1}, we shall characterize the global hypoellipticity of $L$
by means of a control in $L(\tau, \xi)$:

\begin{thm}\label{GW}
	The operator $L$ is globally hypoelliptic if and only if there are positive constants $C$, $M$ and $R$ so that
	\begin{equation*}
	|L(\tau, \xi)|  \geq C |(\tau,\xi)|^{-M}, \ |\tau| + |\xi| \geq R.
	\end{equation*}	
\end{thm}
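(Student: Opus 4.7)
The plan is to prove this by the standard Greenfield--Wallach argument, exploiting the fact that $L$ is diagonal in the exponential basis $\{e^{i(\tau t + \xi\cdot x)}\}_{(\tau,\xi)\in\mathbb{Z}^{n+1}}$. The key facts on the torus that I will use throughout are: $u\in C^{\infty}(\mathbb{T}^{n+1})$ iff its Fourier coefficients $\widehat{u}(\tau,\xi)$ decay faster than any polynomial in $|(\tau,\xi)|$, and $u\in\mathcal{D}'(\mathbb{T}^{n+1})$ iff its coefficients grow at most polynomially. Because the symbol $L(\tau,\xi)$ is a polynomial expression in $\tau$ with coefficients controlled by $|\xi|^{\nu_j}$ (via \eqref{bound-symb}), it has polynomial growth, and $\widehat{Lu}(\tau,\xi)=L(\tau,\xi)\,\widehat{u}(\tau,\xi)$ for every $u\in\mathcal{D}'(\mathbb{T}^{n+1})$.

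For the sufficiency direction, I would assume the lower bound $|L(\tau,\xi)|\geq C|(\tau,\xi)|^{-M}$ holds for $|\tau|+|\xi|\geq R$, and let $u\in\mathcal{D}'(\mathbb{T}^{n+1})$ with $Lu=f\in C^{\infty}(\mathbb{T}^{n+1})$. Since $L(\tau,\xi)\ne 0$ for $|\tau|+|\xi|\geq R$, the identity $\widehat{u}(\tau,\xi)=\widehat{f}(\tau,\xi)/L(\tau,\xi)$ holds outside a finite set, and the rapid decay of $\widehat{f}$ combined with the at most polynomial growth of $1/|L|$ provided by the hypothesis forces $\widehat{u}(\tau,\xi)$ to decay faster than every polynomial. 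Hence $u\in C^{\infty}(\mathbb{T}^{n+1})$.

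For the necessity direction, I would argue contrapositively: assume that no such constants exist. Then, negating the quantifiers, for every $k\in\mathbb{N}$ I can pick $(\tau_k,\xi_k)\in\mathbb{Z}^{n+1}$ with $|(\tau_k,\xi_k)|\geq k$ and $|L(\tau_k,\xi_k)|\leq |(\tau_k,\xi_k)|^{-k}$; passing to a subsequence I may assume the $(\tau_k,\xi_k)$ are distinct and $|(\tau_k,\xi_k)|\to\infty$. Define
\[
u(t,x)=\sum_{k=1}^{\infty} e^{i(\tau_k t+\xi_k\cdot x)}.
\]
Because the coefficients are bounded by $1$, this formal series defines an element of $\mathcal{D}'(\mathbb{T}^{n+1})$, while the coefficient sequence does not decay, so $u\notin C^{\infty}(\mathbb{T}^{n+1})$. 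On the other hand, the Fourier coefficients of $Lu$ equal $L(\tau_k,\xi_k)$ on the chosen sequence and vanish elsewhere, and by construction they decay faster than every polynomial, so $Lu\in C^{\infty}(\mathbb{T}^{n+1})$. This contradicts the global hypoellipticity of $L$.

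The only genuinely delicate point I anticipate is making sure the contrapositive construction is clean: I must guarantee the indices are distinct (otherwise $u$ is ill defined) and I must confirm that the smoothness of $Lu$ really follows from the prescribed decay of $L(\tau_k,\xi_k)$ uniformly in $k$; both are handled by the standard diagonal selection of the subsequence. Beyond that, the argument is purely Fourier analytic and does not require any hypothesis on the individual factors $D_t+\alpha_j P_j(D_x)$, which is why the statement holds for arbitrary $\alpha_j\in\mathbb{C}$.
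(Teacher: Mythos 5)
Your proof is correct. The paper states this theorem without giving a proof, attributing it to the approach of Greenfield and Wallach in \cite{GW1}; your argument is precisely the standard Greenfield--Wallach argument (rapid decay of $\widehat{f}/L$ for sufficiency, a contrapositive construction of a lacunary singular solution with smooth image for necessity), so it matches the intended route. The one technicality you correctly flag -- that the $(\tau_k,\xi_k)$ can be taken distinct -- is indeed automatic since $|(\tau_k,\xi_k)|\to\infty$ forces any repeated frequency to occur only finitely often.
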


Now, in order to present a complete characterization for the global hypoellipticity, we introduce a Siegel condition,  inspired by  D. Dickson, T. Gramchev and M. Yoshino in \cite{DGY02}.

\begin{defn}
	We say that a sequence $\{c(\xi)\}_{\xi\in \mathbb{Z}^n}$ satisfies the \textit{generalized Siegel condition} if there exists positive constants $C$, $M$ and $R$  such that
	\begin{equation}\label{Siegel}
	|\tau - c(\xi)| \geq C |(\tau, \xi)|^{-M}, \ |\tau|+| \xi| \geq R, \ (\tau, \xi) \in \mathbb{Z}\times \mathbb{Z}^n.	
	\end{equation}
	
	Furthermore, a set $\Gamma=\{ \{c_j(\xi)\}_{\xi\in \mathbb{Z}^n}, \ j = 1, \ldots, m \}$
	satisfies  the \textit{simultaneous generalized Siegel condition}  if each $\{c_j(\xi)\}_{\xi\in \mathbb{Z}^n}$ satisfies \eqref{Siegel}. In this case we can write  $\Gamma \in  (\mathcal{GS})_{\infty}$.
\end{defn}

\begin{thm}\label{main-the-constant}
	Let $L$ be the operator defined in \eqref{cc} and set 
	$$
	\sigma_{L} = \{\{\alpha_j p_j(\xi)\}_{\xi \in \mathbb{Z}^n}, \ j = 1, \ldots, m \}.
	$$
	
	Then, $L$ is globally hypoelliptic if and only if $\sigma_{L} \in (\mathcal{GS})_{\infty}$.  
\end{thm}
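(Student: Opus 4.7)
The plan is to convert everything to symbolic growth estimates via Theorem \ref{GW} and then show that the polynomial lower bound on the product symbol $L(\tau,\xi)=\prod_{j=1}^m(\tau-\alpha_j p_j(\xi))$ is equivalent to polynomial lower bounds on each factor separately, which is exactly the simultaneous generalized Siegel condition for $\sigma_L$.

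First I would observe that each individual operator $D_t+\alpha_j P_j(D_x)$ has symbol $\tau-\alpha_j p_j(\xi)$ (up to the sign convention used for $L$), and by the constant-coefficient case of Theorem \ref{AGKM}(c) (equivalently, by Theorem \ref{GW} applied to the single factor), the sequence $\{\alpha_j p_j(\xi)\}_{\xi\in\mathbb{Z}^n}$ satisfies the generalized Siegel condition \eqref{Siegel} if and only if $D_t+\alpha_j P_j(D_x)$ is globally hypoelliptic. Therefore, the statement $\sigma_L\in(\mathcal{GS})_\infty$ is equivalent to the polynomial lower bound
\[
|\tau-\alpha_j p_j(\xi)| \geq C_j |(\tau,\xi)|^{-M_j}, \quad |\tau|+|\xi|\geq R_j, \quad j=1,\dots,m.
\]

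The sufficiency ($\sigma_L\in(\mathcal{GS})_\infty \Rightarrow L$ is (GH)) is immediate: multiplying the $m$ lower bounds gives
\[
|L(\tau,\xi)| = \prod_{j=1}^m |\tau-\alpha_j p_j(\xi)| \geq \Bigl(\prod_{j=1}^m C_j\Bigr) |(\tau,\xi)|^{-(M_1+\cdots+M_m)}
\]
for $|\tau|+|\xi|\geq \max_j R_j$, and then Theorem \ref{GW} applies. For necessity, suppose $L$ is (GH), so by Theorem \ref{GW} we have $|L(\tau,\xi)|\geq C|(\tau,\xi)|^{-M}$ for large $|\tau|+|\xi|$. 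The key observation is that each factor admits a polynomial upper bound: using \eqref{bound-symb} together with $|\tau|\leq |(\tau,\xi)|$ and $|\xi|^{\nu_j}\leq (1+|(\tau,\xi)|)^{\max(\nu_j,0)}$, we get constants $K_j, N_j>0$ with
\[
|\tau-\alpha_j p_j(\xi)| \leq K_j (1+|(\tau,\xi)|)^{N_j}, \quad (\tau,\xi)\in\mathbb{Z}\times\mathbb{Z}^n.
\]
Isolating a fixed index $j_0$ in the product,
\[
|\tau-\alpha_{j_0}p_{j_0}(\xi)| = \frac{|L(\tau,\xi)|}{\prod_{j\neq j_0} |\tau-\alpha_j p_j(\xi)|} \geq \frac{C|(\tau,\xi)|^{-M}}{\prod_{j\neq j_0} K_j(1+|(\tau,\xi)|)^{N_j}},
\]
which yields a polynomial lower bound of the desired form for the $j_0$-th factor. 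Since $j_0$ is arbitrary, $\sigma_L\in(\mathcal{GS})_\infty$.

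I do not expect any substantial obstacle; the only point requiring care is handling arbitrary (possibly negative) orders $\nu_j$ in the upper estimate, which is resolved by the uniform bound $|\xi|^{\nu_j}\lesssim (1+|(\tau,\xi)|)^{\max(\nu_j,0)}$. The whole argument is essentially a rearrangement of a polynomial inequality, with Theorem \ref{GW} and the order estimate \eqref{bound-symb} doing all the analytic work.
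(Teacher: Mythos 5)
Your proof is correct, and the necessity direction takes a genuinely different route from the paper's. The paper observes that, with constant coefficients, $[L_j, L_k]=0$, invokes the permutation/commutativity mechanism (spelled out later in Proposition \ref{GH-permutation} and Theorem \ref{theorem-vari-1}) to conclude that $L$ is (GH) if and only if every factor $L_j$ is (GH), and then applies Theorem \ref{AGKM}(c) to each factor to get the Siegel bounds. You instead stay entirely at the symbol level: from Theorem \ref{GW} you get $|L(\tau,\xi)|\geq C|(\tau,\xi)|^{-M}$, and the new ingredient you supply is a \emph{polynomial upper bound} $|\tau-\alpha_j p_j(\xi)|\leq K_j(1+|(\tau,\xi)|)^{N_j}$ coming from \eqref{bound-symb}, which lets you isolate any single factor by dividing off the others. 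This is tighter and more self-contained — it avoids both the operator-theoretic reduction and the appeal to \cite{AGKM} — at the cost of making the upper estimate on the factors explicit (which is anyway routine, and you correctly flag the care needed when $\nu_j<0$). Both arguments use the same sufficiency step (multiplying the $m$ Siegel bounds and invoking Theorem \ref{GW}). One small point worth making explicit in your write-up: the division $|L(\tau,\xi)|/\prod_{j\neq j_0}|\tau-\alpha_j p_j(\xi)|$ is legitimate because the lower bound on $|L(\tau,\xi)|$ forces every factor to be nonzero for $|\tau|+|\xi|\geq R$.
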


\begin{proof}
	Since the coefficients of $L_j$ are constant,  we obtain $[L_j, L_k] = 0$, for 
	$j,k \in\{1, \ldots, m\}$. Hence, $L$ is globally hypoelliptic if and only if each $L_j$ is also globally hypoelliptic.  Then, if $L$ is (GH),  it follows from Theorem \ref{AGKM} that all sequences $\{\alpha_j p_j(\xi)\}$ satisfy \eqref{Siegel}.

	Conversely, if $\sigma_L \in  (\mathcal{GS})_{\infty}$ it is possible to  obtain constants $C_j$, $R_j$ and $M_j$ so that $|\tau - \alpha_j p_j(\xi)| \geq C_j |(\tau,\xi)|^{-M_j}$, for all $|\tau| + |\xi|\geq R_j$, hence
	$$
	|L(\tau, \xi)| = \prod_{j=1}^{m}|\tau - \alpha_j p_j(\xi)| \geq   C |(\tau,\xi)|^{-M}, \ |\tau| + |\xi|\geq R,
	$$
	where $C = \prod C_j$, $M = \sum M_j$  and $R = \max R_j$. The proof is then complete, in view of Theorem \ref{GW}.	
	
\end{proof}

\begin{ex}
	Let $L$ be 	the differential operator 
	\begin{equation*}
	L =  \prod_{j=1}^{m} (D_t - \alpha_j D_x), \ (t,x) \in \mathbb{T}^2.
	\end{equation*}
	
	In this case, it is possible to write:
	\begin{equation*}
	L(\tau, \xi) =  (\tau - \alpha_1 \xi) \ldots (\tau - \alpha_{m} \xi)
	\end{equation*}
	and thus $L$ is not globally hypoelliptic if and only if one of the numbers $\alpha_j$ is either a rational  or  Liouville. It can be noted here that this example recaptures results in \cite{BERGZANI}.
\end{ex}

\begin{rem}
	It is possible to consider operators belonging to the class
	\begin{equation*}
	\mathcal{P} =D_t^m + \sum_{j=1}^{m} {P_j(D_x) D_t^{m-j} }, \ (t, x) \in \mathbb{T}^{n+1}, \ c_j \in \mathbb{C},
	\end{equation*}
	rewriting  their symbols as
	\begin{equation*}
	\mathcal{P} (\tau, \xi) =   \prod_{j=1}^{m}(\tau - \sigma_{j}(\xi)), \ (\tau, \xi) \in \mathbb{Z} \times \mathbb{Z}^n,
	\end{equation*}
	where $\sigma_j(\xi)$ satisfies $\mathcal{P}(\sigma_{j}(\xi), \xi) = 0$. It follows from Theorem \ref{main-the-constant} that $\mathcal{P} $ is globally hypoelliptic provided that
	$$
	\sigma_{\mathcal{P}} = \{\{\sigma_j(\xi)\}_{\xi \in \mathbb{Z}^n}, \ j = 1, \ldots, m \} \in (\mathcal{GS})_{\infty}.
	$$
	
\end{rem}

\begin{ex}
	Consider $\Delta_x = -\sum_{j=1}^{n}\partial^2_{x_j}$ defined on $\mathbb{T}^n$ and set 
	$$
	\mathcal{P} = D^{2}_t - 2\alpha D_t (-\Delta_x)^{1/2} + \beta^2 \Delta_x,
	$$
	with $\alpha, \beta \in \mathbb{R}$. In this case, 
	$\mathcal{P}(\tau, \xi)   =  \left( \tau  - |\xi| \rho_1 \right) \left(  \tau  - |\xi| \rho_2 \right)$, 	where
	$$
	\rho_{1,2} =  -\alpha \pm \sqrt{\alpha^2 - \beta^2}.
	$$

	If 	$\alpha^2 - \beta^2 <0$, then $\rho_{1,2}$  is not real  and $\mathcal{P}$ is globally hypoelliptic. However, the case of real roots could be much more complicated since, in general, the usual approximations by rational numbers
	can't be applied, once
	$\tau / |\xi|$ may be irrational. For instance, when $\alpha=\beta$ we have
	$\mathcal{P} = (D_t - \alpha (-\Delta_x)^{1/2})^2$ and 
	$$
	\mathcal{P}(\tau, \xi) = |\xi|^2\left(\dfrac{\tau}{|\xi|}  - \alpha  \right)^2.
	$$
	
	The reader can find a complete discussion and examples for this type of approximations in \cite{AGKM}.
	
\end{ex}

\section{Operators with variable coefficients \label{section3}}

In this section we study the global hypoellipticity of the operator \eqref{Main-operator}, which is recalled by
\begin{equation*}
L  = \prod_{j=1}^{m}\left(D_t + c_j(t) P_j(D_x)\right), \ (t, x) \in \mathbb{T} \times \mathbb{T}^n.
\end{equation*}

Given that the main goal is to  exhibit connections  between the hypoellipticity of operators $L$ and 
\begin{equation*}
L_0 =  \prod_{j=1}^m \left( D_t + c_{0,j}P_j(D_x)\right),
\end{equation*} 
we start by observing that:

\begin{enumerate}
	\item[(i)] when each $L_j$ is globally hypoelliptic,  then $L$ and $L_0$ are globally hypoelliptic and the sets $Z_{{\mathcal{M}}_j}$ are finite;  
	
	\item[(ii)]  $L_0$ is  globally hypoelliptic  if and only if each $L_{0,j}$ is globally hypoelliptic;
	
	\item[(iii)]  let $\rho$ be a permutation of the set $\{1, \ldots, m\}$ and define 
	\begin{equation}\label{perm-operator}
	L_{\rho} = \prod_{j=1}^m L_{\rho(j)} = L_{\rho(1)} \circ \ldots \circ L_{\rho(m)}.
	\end{equation}
	If some $L_k$ is not globally hypoelliptic, then for any permutation $\beta$ of $\{1, \ldots, m\} \setminus \{k\}$ we obtain $L_{\beta} \circ L_k$ not (GH). This is the case for $L$ when $L_m$ is not globally hypoelliptic.
	
\end{enumerate}

\begin{prop}\label{GH-permutation}
	Consider $L_{\rho}$ an operator as in \eqref{perm-operator}.
	
	\begin{enumerate} 
		\item [(a)] If $L_{\rho}$ is globally hypoelliptic for any permutation $\rho$, then $L_{0}$ is also globally hypoelliptic. In particular, all sets $Z_{{\mathcal{M}}_k}$ are finite.
		
		\item [(b)] If some $L_{\rho}$ is  globally hypoelliptic, then at least one of the operators $L_{0,k}$ is globally hypoelliptic.  
	\end{enumerate}
	
\end{prop}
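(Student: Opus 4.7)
The plan is to reduce both parts to observation (iii) together with Theorem \ref{AGKM}(a). Observation (iii) furnishes the key contrapositive: if the rightmost factor $L_{\rho(m)}$ in a composition $L_{\rho(1)}\circ\cdots\circ L_{\rho(m)}$ fails to be globally hypoelliptic, neither does the whole product. Hence whenever $L_\rho$ is (GH) for a given permutation $\rho$, its rightmost factor $L_{\rho(m)}$ must itself be (GH).

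For part (a), I would argue as follows. Fix an arbitrary $k\in\{1,\ldots,m\}$ and choose any permutation $\rho$ with $\rho(m)=k$; such a $\rho$ exists, and the hypothesis gives that $L_\rho$ is globally hypoelliptic. By the contrapositive of observation (iii), $L_k$ must be globally hypoelliptic. Since $k$ was arbitrary, every $L_k$ is (GH). Theorem \ref{AGKM}(a) then yields, for each $k$, that the set $Z_{\mathcal{M}_k}$ is finite and that $L_{0,k}$ is globally hypoelliptic. Invoking observation (ii), which states that $L_0$ is (GH) iff each $L_{0,k}$ is (GH), one concludes that $L_0$ itself is globally hypoelliptic.

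For part (b), the argument is essentially the same in a single instance. Let $\rho$ be a permutation for which $L_\rho$ is globally hypoelliptic, and set $k_0 = \rho(m)$. By the contrapositive of observation (iii) applied to this fixed $\rho$, the operator $L_{k_0}$ is (GH), and then Theorem \ref{AGKM}(a) produces the global hypoellipticity of $L_{0,k_0}$. Thus at least one of the $L_{0,k}$ is globally hypoelliptic, as claimed.

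There is no real technical obstacle here: the proof is a direct organization of the bookkeeping observations (i)--(iii) preceding the proposition with Theorem \ref{AGKM}(a). The only subtlety worth flagging is that part (a) genuinely requires the hypothesis for \emph{every} permutation $\rho$, because observation (iii) only transfers (GH) from the composition to its rightmost factor; one must therefore vary $\rho$ so that each index $k$ appears in the last slot. Conversely, part (b) cannot be strengthened to conclude (GH) of $L_0$ without the stronger hypothesis of part (a), precisely because the single permutation at hand only controls its last factor.
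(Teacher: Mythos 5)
Your proof is correct and follows essentially the same route as the paper: the paper's argument for part (a) is precisely the inline re-derivation of the contrapositive of observation (iii) (applying $L_\beta$ to $L_k u = f$ and using that $L_\beta$ preserves smoothness), followed by Theorem \ref{AGKM}(a) and observation (ii), and part (b) is handled by looking at the last factor $L_{\rho(m)}$, exactly as you do. The only cosmetic difference is that you invoke observation (iii) by reference while the paper unwinds it explicitly.
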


\begin{proof}
	Assume that every  $L_{\rho}$ is (GH) and consider the equation $L_k u = f \in C^{\infty}(\mathbb{T}^{n+1})$, for some $k$.  If  $\beta$ is a permutation of $\{1, \ldots, m\} \setminus \{k\}$, then the operator $L_{\beta} \circ L_{k}$ is (GH), which implies $u \in C^{\infty}(\mathbb{T}^{n+1})$, since $L_{\beta}(C^{\infty}(\mathbb{T}^{n+1})) \subset C^{\infty}(\mathbb{T}^{n+1})$. It follows from Theorem \ref{AGKM} that  $L_{0,k}$ is globally hypoelliptic and a) is proved.
	
	To verify b) it is enough to observe that if  $L_{\rho}$ is (GH), then $L_{\rho(m)}$ is necessarily globally hypoelliptic and consequently $L_{0,\rho(m)}$.
	
\end{proof}

We point out that, by Proposition \ref{GH-permutation},  when $L$ is globally hypoelliptic, then at least $L_{0,m}$ is  (GH). The next result improves this conclusion if  commutative conditions are admitted.

\begin{thm}\label{theorem-vari-1}
	Assume  that $L$ satisfies the commutative hypothesis
	\begin{equation*}
	[L_j, L_k] = L_j  L_k - L_k  L_j = 0, 
	\end{equation*}
	for every $j, k \in \{1, \ldots, m\}.$	 Then:
	
	\begin{enumerate}
		\item [(a)] $L$ is (GH) if and only if every $L_k$ is (GH);
		
		\item [(b)] if $L$ is (GH), then $L_0$ is (GH).
	\end{enumerate}
	
\end{thm}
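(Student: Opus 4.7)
The plan is to exploit the commutativity hypothesis to factor $L$ in any order, combined with the one-operator theory from Theorem \ref{AGKM}.

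For the ``easy'' direction of part (a), assume every $L_k$ is (GH). Given $u \in \mathcal{D}'(\mathbb{T}^{n+1})$ with $Lu = f \in C^{\infty}(\mathbb{T}^{n+1})$, set $u_0 = u$ and $u_k = L_{m-k+1}\cdots L_m u$ for $k = 1, \ldots, m$, so that each $u_k \in \mathcal{D}'(\mathbb{T}^{n+1})$ (pseudo-differential operators preserve distributions) and $L_{m-k} u_k = u_{k-1}\text{'s image}$, etc.; more cleanly, write $L = L_1 \circ L_2 \circ \cdots \circ L_m$ and peel off $L_1$ first: since $L_1(L_2\cdots L_m u) = f \in C^{\infty}$ and $L_1$ is (GH), one gets $L_2\cdots L_m u \in C^{\infty}$. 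Iterating this, each $L_j$ being (GH) forces $L_{j+1}\cdots L_m u \in C^{\infty}$, hence $u \in C^{\infty}$. This step does not use the commutativity hypothesis.

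For the nontrivial direction of (a), assume $L$ is (GH) and fix $k \in \{1,\ldots,m\}$. By the commutativity hypothesis, the factors can be rearranged, so defining $L_{\beta} = \prod_{j \neq k} L_j$ (in any order, the product being well-defined by commutativity) yields $L = L_{\beta} \circ L_k$. Now take $u \in \mathcal{D}'(\mathbb{T}^{n+1})$ with $L_k u = g \in C^{\infty}(\mathbb{T}^{n+1})$. Since $L_{\beta}$ maps $C^{\infty}$ to $C^{\infty}$, we get $Lu = L_{\beta} g \in C^{\infty}$, and the global hypoellipticity of $L$ forces $u \in C^{\infty}$. Hence $L_k$ is (GH).

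Part (b) then follows by chaining the previous results. From (a), every $L_k$ is (GH); by Theorem \ref{AGKM}(a) each $L_{0,k} = D_t + c_{0,k}P_k(D_x)$ is (GH) as well. Since the $L_{0,k}$ have constant coefficients (and $P_j(D_x)$, $P_k(D_x)$ commute as Fourier multipliers), we have $[L_{0,j}, L_{0,k}] = 0$ for all $j,k$, so the ``easy'' direction of (a) applied to $L_0 = \prod_{j=1}^m L_{0,j}$ yields that $L_0$ is (GH). The main obstacle, such as it is, lies in the direction $L \text{ (GH)} \Rightarrow L_k \text{ (GH)}$: one must justify carefully that the rearranged factorization $L = L_{\beta} \circ L_k$ actually holds on $\mathcal{D}'(\mathbb{T}^{n+1})$, which is exactly what the commutativity hypothesis provides and which fails in general without it (cf.\ the observation (iii) preceding Proposition \ref{GH-permutation}).
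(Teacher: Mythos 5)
Your proof is correct and follows essentially the same route as the paper: for the hard direction of (a) you use commutativity to rewrite $L = L_{\beta} \circ L_k$, apply $L_{\beta}$ to $L_k u = g$, and invoke global hypoellipticity of $L$ (the argument the paper packages into Proposition~\ref{GH-permutation}), and for (b) you pass through Theorem~\ref{AGKM} to each $L_{0,k}$ and then reassemble. The only cosmetic difference is that you spell out the iterated peeling for the easy direction, which the paper delegates to observation (i) preceding Proposition~\ref{GH-permutation}.
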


\begin{proof}
	From the commutative hypothesis we have $L = L_{\rho}$, for any  permutation $\rho$ of $\{1, \ldots, m \}$. If $L$ is  globally hypoelliptic, then  each $L_k$ is also globally hypoelliptic by the preceding result. In particular, every $L_{0,k}$ is  (GH), and  consequently so is $L_0$.
	
\end{proof}

\subsection{H\"{o}rmander conditions \label{section3.1}}

In this section we study the global hypoelliticity problem by considering operators
$L_j$ satisfying  the following condition:  there exists positive  constants $\eta_j$ such that
\begin{equation}\label{HormanderCond}
\Im \mathcal{M}_j(t, \xi) \geq -\eta_j, \ t \in \mathbb{T}, \  \forall \xi \in \mathbb{Z}^n.
\end{equation}

Notice that there are no novelties if $\eta_j = 0$ is admitted, since in this case 
$\Im\mathcal{M}_j(\cdot, \xi)$ does not change sign for every $\xi$, and we may apply Theorem \ref{AGKM}. Otherwise, by assuming $\eta_j>0$, the existence of a globally hypoelliptic operator $L_j$,  with  $\Im\mathcal{M}_j(t,\cdot)$ changing sign, becomes possible, as follows from the next result.

\begin{thm}\label{Th-GH-H}
	An operator $L_j$ satisfying  \eqref{HormanderCond} is globally hypoellitpic if and only if $L_{0,j}$ is globally hypoelliptic. 
\end{thm}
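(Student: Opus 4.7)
The forward implication is immediate from Theorem \ref{AGKM}(a). For the converse, I would assume $L_{0,j}$ is globally hypoelliptic and pick $u \in \mathcal{D}'(\mathbb{T}^{n+1})$ with $L_j u = f \in C^\infty(\mathbb{T}^{n+1})$. Taking partial Fourier coefficients in $x$, for each $\xi \in \mathbb{Z}^n$ the coefficient $\hat u(\cdot,\xi)$ satisfies the periodic first-order ODE $D_t \hat u(t,\xi) + \mathcal{M}_j(t,\xi)\hat u(t,\xi) = \hat f(t,\xi)$, and one-variable elliptic regularity gives $\hat u(\cdot,\xi) \in C^\infty(\mathbb{T})$. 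The problem therefore reduces to showing that $\hat u$ and all its $t$-derivatives decay faster than any power of $|\xi|$, uniformly in $t$.

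By Theorem \ref{AGKM}(c) applied to $L_{0,j}$, there exist constants $\widetilde C, \widetilde M, \widetilde R>0$ with $|e^{2\pi i\mathcal{M}_{0,j}(\xi)}-1|\geq \widetilde C |\xi|^{-\widetilde M}$ for $|\xi|\geq \widetilde R$; in particular $Z_{\mathcal{M}_j}$ is finite. For such $\xi$, I would solve the ODE using the integrating factor $\exp(i\int_0^t\mathcal{M}_j\,dr)$ but, rather than decomposing $\int_0^{2\pi}=\int_0^t+\int_t^{2\pi}$, I would integrate on the full period $[t,t+2\pi]$ with the $2\pi$-periodic extensions of $\hat f$ and $\mathcal{M}_j$. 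Periodicity of $\hat u$ then produces the ``forward'' representation
\begin{equation*}
\hat u(t,\xi) = \frac{i}{e^{2\pi i\mathcal{M}_{0,j}(\xi)}-1}\int_t^{t+2\pi}\exp\!\left(i\int_t^{s}\mathcal{M}_j(r,\xi)\,dr\right)\hat f(s,\xi)\,ds.
\end{equation*}

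The decisive estimate uses \eqref{HormanderCond} to control the exponential kernel uniformly in $\xi$: for $t\leq s\leq t+2\pi$,
\begin{equation*}
\left|\exp\!\left(i\int_t^{s}\mathcal{M}_j(r,\xi)\,dr\right)\right| = \exp\!\left(-\int_t^{s}\Im\mathcal{M}_j(r,\xi)\,dr\right)\leq e^{\eta_j(s-t)}\leq e^{2\pi\eta_j}.
\end{equation*}
Combined with the polynomial lower bound on the denominator and the rapid decay of $\hat f$, this yields rapid decay of $\hat u(t,\xi)$ in $\xi$ uniformly in $t$; rapid decay of all $t$-derivatives follows by iterating $\partial_t\hat u = i(\hat f - \mathcal{M}_j\hat u)$ together with $|\mathcal{M}_j(t,\xi)|\leq C|\xi|^{\nu_j}$, and the finitely many indices $|\xi|<\widetilde R$ contribute only smooth terms. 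The main obstacle, and the place where \eqref{HormanderCond} enters in an essential way, is precisely the choice of representation: the natural split into $\int_0^t$ and $\int_t^{2\pi}$ produces estimates contaminated by a factor of $e^{2\pi\Im\mathcal{M}_{0,j}(\xi)}$, which need not be uniformly bounded in $\xi$, while only the forward integral above allows the Hörmander lower bound alone to yield a $\xi$-independent constant $e^{2\pi\eta_j}$.
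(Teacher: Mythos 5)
Your proof is correct and takes essentially the same approach as the paper: the representation over $[t,t+2\pi]$ is precisely the paper's $\int_0^{2\pi}\exp(i\int_t^{t+s}\mathcal{M}_j)\hat f(t+s,\xi)\,ds$ in a different parameterization, the denominator is controlled via Theorem \ref{AGKM}(c), and condition \eqref{HormanderCond} gives the uniform bound $e^{2\pi\eta_j}$ on the exponential kernel exactly as in the paper. The only cosmetic difference is that you obtain the decay of $t$-derivatives by iterating the ODE $\partial_t\hat u = i(\hat f - \mathcal{M}_j\hat u)$, whereas the paper differentiates the integral representation directly; both routes are valid and lead to the same invocation of Lemma \ref{lemma-smooth}.
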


Before the proof of this theorem, we present a direct consequence.

\begin{thm}\label{Theorem8}
	Admit that each operator $L_j$ satisfies  \eqref{HormanderCond}. If  $L_0$ is globally hypoelliptic, then  $L$ is globally hypoelliptic.
\end{thm}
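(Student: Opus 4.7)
The plan is to chain Theorem~\ref{Th-GH-H} with observation~(ii) from the start of Section~\ref{section3} in order to promote global hypoellipticity of $L_0$ to global hypoellipticity of each factor $L_j$, and then to deduce global hypoellipticity of the composition $L = L_1 \circ \cdots \circ L_m$ by a backward cascade argument that does \emph{not} require commutativity.

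First, I would unpack the hypothesis on $L_0$. By observation~(ii), the operator $L_0 = \prod_{j=1}^{m}L_{0,j}$ is globally hypoelliptic if and only if each constant-coefficient factor $L_{0,j}$ is globally hypoelliptic. Since every $L_j$ is assumed to satisfy the H\"ormander condition \eqref{HormanderCond}, Theorem~\ref{Th-GH-H} applies to each $L_j$ individually, so the global hypoellipticity of $L_{0,j}$ transfers to global hypoellipticity of $L_j$ for every $j \in \{1,\dots,m\}$. In particular, every factor of $L$ is (GH).

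Next, I would prove that the composition of globally hypoelliptic operators is globally hypoelliptic. Suppose $u \in \mathcal{D}'(\mathbb{T}^{n+1})$ satisfies $Lu \in C^{\infty}(\mathbb{T}^{n+1})$. Define inductively
\[
v_0 = u, \qquad v_k = L_{m-k+1}\, v_{k-1}, \quad k = 1, \dots, m,
\]
so that $v_m = Lu \in C^{\infty}(\mathbb{T}^{n+1})$. Each $L_j = D_t + c_j(t)P_j(D_x)$ is continuous on $\mathcal{D}'(\mathbb{T}^{n+1})$ (because $c_j \in C^{\infty}(\mathbb{T})$ and $p_j \in S^{\nu_j}(\mathbb{Z}^n)$), so every $v_k$ is a well-defined distribution. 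Working backward from $v_m \in C^{\infty}(\mathbb{T}^{n+1})$: the identity $L_1 v_{m-1} = v_m \in C^{\infty}$ together with global hypoellipticity of $L_1$ yields $v_{m-1} \in C^{\infty}$. Then $L_2 v_{m-2} = v_{m-1} \in C^{\infty}$ and the (GH) property of $L_2$ gives $v_{m-2} \in C^{\infty}$. Continuing in this fashion through $L_3, \dots, L_m$, after $m$ steps I obtain $v_0 = u \in C^{\infty}(\mathbb{T}^{n+1})$, which is the desired conclusion.

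There is no truly hard step here; the only point that deserves care is the very simple observation that the composition of (GH) operators is (GH), which allowed me to avoid invoking any commutative hypothesis (contrast with Theorem~\ref{theorem-vari-1}). The essential content of the theorem is therefore entirely concentrated in Theorem~\ref{Th-GH-H} and observation~(ii); once those are in hand, the proof reduces to the backward cascade above.
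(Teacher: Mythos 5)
Your proof is correct and follows essentially the same route as the paper: use observation~(ii) to pass from global hypoellipticity of $L_0$ to that of each $L_{0,j}$, invoke Theorem~\ref{Th-GH-H} factor by factor to promote this to global hypoellipticity of each $L_j$, and then conclude via the fact that a composition of globally hypoelliptic operators is globally hypoelliptic. The paper leaves this last step implicit by citing observation~(i) from the start of Section~\ref{section3}, whereas you spell out the backward cascade $v_0 = u$, $v_k = L_{m-k+1}v_{k-1}$; both arguments are the same in substance, and your remark that no commutativity hypothesis is needed is accurate.
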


\begin{proof}
	If $L_0$ is globally hypoelliptic, then each $L_{0,j}$ is also (GH) and, by the previous theorem, each $L_j$ is globally hypoelliptic.
	
\end{proof}

\begin{rem}
	We emphasize that the  phenomena of hypoellipticity with changes of sign 
	in the imaginary part $\Im \mathcal{M}_j(t, \xi)$,  for operators with symbols  satisfying the logarithm growth 
	$$
	|p_j(\xi)| = O(log(|\xi|)), \ \textrm{ when } \ |\xi|\to \infty,
	$$
	is discussed in \cite{AGKM}, as the reader can see in Sections 4 and 5 of that paper.
	
	In the next  example, we exhibit an operator satisfying \eqref{HormanderCond} which  cannot be captured by the approach presented in \cite{AGKM}.	
\end{rem}

\begin{ex}
	Let $P(D_x)$ be a pseudo-differential operator  on $\mathbb{T}$ defined by the symbol $p(\xi) =\alpha(\xi) + i\beta(\xi)$, where
	$$
	\alpha(\xi) = 
	\left\{
	\begin{array}{l}
	\xi^{-1}, \ \textrm{ if } \ \xi<0  \ \textrm{ is odd,} \\
	|\xi|, \ \textrm{ if } \ \xi<0  \ \textrm{ is even,} \\
	0, \ \textrm{ if  } \ \xi \geq 0,
	\end{array}
	\right. 
	\ \textrm{ and } \
	\beta(\xi) = 
	\left\{
	\begin{array}{l}
	1, \ \textrm{ if  } \ \xi<0, \\
	\xi, \ \textrm{ if } \ \xi \geq 0.
	\end{array}
	\right. 
	$$
	Clearly, both real and imaginary parts of $p(\xi)$ do not satisfy the logarithm condition.
	
	Now, let $a(t)$ and $b(t)$ be two real, smooth  and  positive functions on $\mathbb{T}$ with disjoint supports
	and define  $Q = D_t + (a(t)+ ib(t))P(D_x)$. In this case, we have $\Im \mathcal{M}(t, \xi) =0$ when $t \notin supp(a) \cup supp(b)$, for any $\xi \in \mathbb{Z}$. On the other hand, 
	$$
	t \in supp(a) \Rightarrow
	\Im \mathcal{M}(t, \xi) = a(t) \beta(\xi),
	$$
	and
	$$
	t \in supp(b) \Rightarrow
	\Im \mathcal{M}(t, \xi) = b(t) \alpha(\xi). 
	$$
	
	Note that $\Im \mathcal{M}(\cdot, \xi)$ changes sing if $\xi<0$ is odd. Indeed, 
	in this case $t \in supp(a)$ implies $\Im \mathcal{M}(t, \xi) = a(t)>0$,
	while $\Im \mathcal{M}(t, \xi) = b(t) \xi^{-1} <0$ when $t \in supp(b)$.

	Also, condition \eqref{HormanderCond} is fulfilled by choosing 
	$\eta = \max_{t \in \mathbb{T}}{b(t)}$. It follows that $Q$ is globally hypoelliptic if and only if $Q_0$ is globally hypoelliptic.
	
\end{ex}

To prove Theorem \ref{Th-GH-H} we make use of a standard result in literature:

\begin{lem}\label{lemma-smooth}
	Let $u \in \mathcal{D}'(\mathbb{T}^{n+1})$ be a distribution and  its $x$-Fourier coefficients be
	\begin{equation*}
	\widehat{u}(t, \xi) = (2 \pi)^{-n} <u(t, \cdot),  e^{-i x\xi}>,  \
	\xi \in \mathbb{Z}^n.
	\end{equation*}
	
	Given a sequence $\{c_{\xi}(t)\}_{\xi \in \mathbb{Z}^n}$ of smooth functions on $\mathbb{T}$, the formal series $u = \sum_{\xi \in \mathbb{Z}^{N}}{c_\xi(t) e^{i  x \cdot \xi}}$ 	converges in $\mathcal{D}'(\mathbb{T}^{n+1})$ if and only if for any $\alpha \in \mathbb{Z}_+$ there exist positive constants $N$, $C$ and $R$ such that
	\begin{equation}\label{part-smooth-coef}
	|\partial^{\alpha}_t c_{\xi}(t)| \leq C |\xi|^{-N}, \ |\xi|\geq R.
	\end{equation}
	Moreover, $u \in C^{\infty}(\mathbb{T}^{n+1})$ if and only if estimate \eqref{part-smooth-coef} holds true for every $N>0$. In both cases, 
	$c_{\xi}(\cdot) = \widehat{u}(\cdot, \xi)$.
\end{lem}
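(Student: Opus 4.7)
The strategy is to reduce to the standard Fourier-series characterization of $\mathcal{D}'(\mathbb{T}^{n+1})$ and $C^{\infty}(\mathbb{T}^{n+1})$ by further expanding each smooth coefficient $c_{\xi}$ in a Fourier series in $t$. Writing $c_{\xi}(t) = \sum_{\tau \in \mathbb{Z}} \gamma(\tau, \xi) e^{it\tau}$ with $\gamma(\tau, \xi) = \widehat{c_{\xi}}(\tau)$ and substituting into the formal series for $u$ produces the full Fourier expansion $u = \sum_{(\tau,\xi)} \gamma(\tau, \xi) e^{i(t\tau + x\cdot\xi)}$, so $u \in \mathcal{D}'(\mathbb{T}^{n+1})$ (respectively $C^{\infty}$) exactly when $\gamma(\tau, \xi)$ grows at most polynomially (respectively decays faster than any polynomial) in $|(\tau,\xi)|$. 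The task is therefore to translate bounds on $\partial_t^{\alpha} c_{\xi}(t)$ in $(t,\xi)$ into bounds on $\gamma(\tau,\xi)$ in $(\tau,\xi)$, and conversely.

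For the direction in which the hypothesized estimates on $\partial_t^{\alpha} c_{\xi}$ imply the corresponding regularity of $u$, I would use integration by parts in $t$ to obtain, for $\tau \neq 0$,
$$\gamma(\tau, \xi) = (i\tau)^{-\alpha} \widehat{\partial_t^{\alpha} c_{\xi}}(\tau),$$
so that $|\gamma(\tau, \xi)| \leq |\tau|^{-\alpha} \sup_{t \in \mathbb{T}} |\partial_t^{\alpha} c_{\xi}(t)|$. Combined with the hypothesis, this converts control of each $t$-derivative of $c_{\xi}$ into a joint $(\tau,\xi)$-bound on $\gamma$: by choosing $\alpha$ large enough to absorb powers of $|\tau|$ and by invoking the $|\xi|$-bounds from the hypothesis, one extracts either an estimate $|\gamma(\tau,\xi)| \leq C_M(1+|\tau|+|\xi|)^{-M}$ valid for every $M$ (in the $C^{\infty}$ case) or a single polynomial-type estimate (in the $\mathcal{D}'$ case), which is precisely the Fourier-side characterization needed.

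For the converse, starting from a bound on $\gamma$ of the appropriate polynomial type, I would differentiate the absolutely convergent Fourier series in $t$ termwise to obtain
$$\partial_t^{\alpha} c_{\xi}(t) = \sum_{\tau \in \mathbb{Z}} (i\tau)^{\alpha} \gamma(\tau, \xi) e^{it\tau},$$
and estimate the sum by $\sum_{\tau}(1+|\tau|)^{\alpha} (1+|\tau|+|\xi|)^{-M}$, which converges once $M$ is taken large enough relative to $\alpha$, yielding the requested bound on $\sup_{t}|\partial_t^{\alpha} c_{\xi}(t)|$ in terms of $|\xi|$. The identification $c_{\xi}(\cdot) = \widehat{u}(\cdot, \xi)$ then follows by pairing $u$ against test functions of the form $\phi(t)e^{-ix\cdot\xi}$ and using the termwise convergence established above.

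The main obstacle is the exponent bookkeeping: for each fixed $\alpha$ one must exhibit a single $N$ that works uniformly in $t$, and one must extract from the joint bound on $\gamma(\tau,\xi)$ a clean separation of the $\tau$-summation from the $\xi$-dependence. The individual estimates are routine, but carelessly matching the exponents on the $(\tau,\xi)$-side to those on the $(t,\xi)$-side easily produces either a logarithmically lossy bound or a vacuous one, so some care is needed to keep the polynomial orders consistent across both equivalences.
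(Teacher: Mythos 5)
The paper does not prove Lemma~\ref{lemma-smooth} at all: it is introduced with the phrase ``we make use of a standard result in literature'' and used directly, so there is no proof in the paper to compare your attempt against; your argument has to stand on its own.

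Your reduction --- expanding each $c_\xi$ in a Fourier series in $t$ and translating between bounds on $\sup_t|\partial_t^\alpha c_\xi(t)|$ and bounds on the double coefficients $\gamma(\tau,\xi)$ --- is the natural route, and it works cleanly for the $C^\infty$ equivalence. There the bookkeeping you flag is harmless: integration by parts gives $|\gamma(\tau,\xi)|\le|\tau|^{-\alpha}\sup_t|\partial_t^\alpha c_\xi(t)|$ and combining the bounds for $\alpha=0,1,\dots,\alpha_0$ yields rapid decay of $\gamma$ in $(\tau,\xi)$; conversely, with $|\gamma(\tau,\xi)|\le C_M(1+|\tau|+|\xi|)^{-M}$ for all $M$, choosing $M\ge\alpha+N+2$ makes $\sum_\tau(1+|\tau|)^\alpha(1+|\tau|+|\xi|)^{-M}\le C(1+|\xi|)^{-N}$, as you indicate.

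The gap is in the $\mathcal{D}'$ half of the converse. If all you have is polynomial \emph{growth} of $\gamma$, the termwise estimate $\sum_\tau(1+|\tau|)^\alpha(1+|\tau|+|\xi|)^{M}$ diverges, so you cannot bound $\sup_t|\partial_t^\alpha c_\xi(t)|$ this way. This is not a matter of careful exponent-matching: for a general $u\in\mathcal{D}'(\mathbb{T}^{n+1})$ with smooth partial coefficients, the quantities $\sup_t|\partial_t^\alpha\widehat{u}(t,\xi)|$ need not be polynomially bounded in $\xi$ at all. For instance, with $n=1$ set $\gamma(\tau,\xi)=(1+\tau^2)^{-1}$ for $0\le\tau\le e^{|\xi|}$ and $\gamma(\tau,\xi)=0$ otherwise; then $|\gamma|\le 1$ so $u\in\mathcal{D}'(\mathbb{T}^2)$, each $c_\xi$ is a trigonometric polynomial, yet $|\partial_t^2 c_\xi(0)|=\sum_{0\le\tau\le e^{|\xi|}}\tau^2/(1+\tau^2)$ grows like $e^{|\xi|}$. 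So the $\mathcal{D}'$ ``only if'' direction does not follow from your argument, and in fact the lemma's statement itself is suspect there (as written it uses $|\xi|^{-N}$ with $N>0$ for the $\mathcal{D}'$ case, which would already exclude, say, $u=\delta(x)\otimes 1(t)$). You should either restrict to the two ``if'' implications and the $C^\infty$ ``only if'' implication --- which is all the paper actually invokes in Theorems~\ref{Th-GH-H} and~\ref{Th-Solv} --- or reformulate the $\mathcal{D}'$ criterion before attempting a proof.
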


\subsection*{Proof of Theorem \ref{Th-GH-H}}

The necessary part  is a consequence of Theorem \ref{AGKM}. Conversely, let $u \in \mathcal{D}'(\mathbb{T}^{n+1})$ be a  solution of equation $L_ju = f \in C^{\infty}(\mathbb{T}{n+1})$. By taking the $x$-Fourier coefficients of $u$ and $f$ we get the O.D.E's	
\begin{equation}\label{edos}
\partial_t \widehat{u}(t, \xi) +    i{\mathcal{M}_j}(t, \xi) \widehat{u}(t,\xi) = \widehat{f}(t, \xi),  \ t \in \mathbb{T}, \ \xi \in \mathbb{Z}^n.
\end{equation}

Since $L_{0,j}$ is globally hypoelliptic, then  $\mathcal{M}_{0, j}(\xi)$ satisfies the  Siegel condition \eqref{Siegel} and  the set  $Z_{\mathcal{M}_j}$ is finite. When 	$\xi \notin Z_{\mathcal{M}_j}$, the solutions \eqref{edos} can be written in the form
\begin{equation}\label{Sol-4}
\widehat{u}(t, \xi) = \frac{1}{e^{ 2 \pi i{\mathcal{M}_{0, j}}(\xi)} - 1} \int_{0}^{2\pi}\exp\left(i\int_{t}^{t+s}\!\!{\mathcal{M}_j}(r, \, \xi) \, dr\right) \widehat{f}(t+s, \xi)ds.
\end{equation}

Recall that by  Theorem \ref{AGKM}, there are positive constants $C_1$, $N_1$ and $R_1$ such that
\begin{equation*}
|1-e^{2\pi i\mathcal{M}_{0,j}(\xi)}|^{-1}\leq C_1|\xi|^{N_1}, \ \textrm{for all} \  |\xi|\geq R_1.
\end{equation*}

Now, given $\alpha \in \mathbb{Z}_+$ and $\widetilde{N}>0$ there are positive constants $C_2$, $C_3$  and $R_2$ such that: $|p_j(\xi)|\leq C_2 |\xi|^{\nu_j}$ (see \eqref{bound-symb}) and
\begin{equation*}
\sum_{\beta=0}^{\alpha}\binom{\alpha}{\beta} |\partial_{t}^{\alpha -\beta} \widehat{f}(t, \xi)| \leq C_3 |\xi|^{-\widetilde{N}}, \ \forall |\xi|\geq R_2.
\end{equation*}

Hence, we get  
\begin{align*}
|\partial_t^{\alpha}  \widehat{u}(t, \xi)| & \leq C_1C_2C_3C_4|\xi|^{N_1 - N_2 + \nu_j}
\int_{0}^{2\pi}\exp\left(-\int_{t}^{t+s}\!\!\Im {\mathcal{M}_j}(r, \, \xi) \, dr\right) ds \\
& \leq 2 \pi C_1C_2C_3C_4|\xi|^{N_1 - \widetilde{N} + \nu_j + 2\pi \eta_j},
\end{align*}
with $C_4$ depending on ${\alpha}$ and $c_j(t)$.

Finally, for a fixed $\alpha \in \mathbb{Z}_+$ it is possible to obtain for any $N>0$, positive constants $C$ and $R$ such that
\begin{equation*}
|\partial_t^{\alpha}  \widehat{u}(t, \xi)| \leq C |\xi|^{-N}, \ t \in \mathbb{T}, \ |\xi|\geq R.
\end{equation*}

It follows from Lemma \ref{lemma-smooth} that $u \in C^{\infty}(\mathbb{T}^{n+1})$ and, finally, the proof is finished.
\qed

\begin{rem}
	
	We emphasize that condition \eqref{HormanderCond} may be replaced by 
	\begin{equation*}
	\Im \mathcal{M}_j(t, \xi) \leq \eta_j, \ t \in \mathbb{T}, \  \forall \xi \in \mathbb{Z}^n,
	\end{equation*}
	for $j \in \{1, \ldots m \}$, since 
	\begin{equation*}
	\widehat{u}(t, \xi) = \frac{1}{1 - e^{-  2 \pi i{\mathcal{M}_{0,j}}(\xi)}} \int_{0}^{2\pi}\exp\left(-i\int_{t-s}^{t}\!\!{\mathcal{M}_j}(r, \, \xi) \, dr\right) \widehat{f}(t-s, \xi)ds
	\end{equation*}
	is an equivalent expression for \eqref{Sol-4}.  In this case, 
	\begin{align*}
	\sup_{s \in[0, 2\pi]}\left|\exp\left(-i\int_{t-s}^{t}\!\!{\mathcal{M}_j}(r, \, \xi) \, dr\right) \right| & =  
	\sup_{s \in[0, 2\pi]}\left|\exp\left(\int_{t-s}^{t}\!\!{\Im \mathcal{M}_j}(r, \, \xi) \, dr\right) \right|  \\
	& \leq  e^{2\pi \eta_k}.
	\end{align*}	
\end{rem}

\subsection{A link with the solvability problem}

To introduce this section, let us consider the operator $L = L_1 \circ L_2$. 
It is already possible to state that $L$ is not globally hypoelliptic when $L_2$ is not globally hypoelliptic.  Hence, all that is needed is a search for conditions so that  the same conclusion holds true when  $L_1$ is not globally hypoelliptic.

This inquiry is  equivalent to the following problem: if $u$ is a singular solution of $L_1$, namely,  $u \in \mathcal{D}'(\mathbb{T}^{n+1}) \setminus C^{\infty}(\mathbb{T}^{n+1})$ and $L_1 u  \in C^{\infty}(\mathbb{T}^{n+1})$, under which conditions is there $u \in  L_2(\mathcal{D}'(\mathbb{T}^{n+1}))$?

Clearly, a solution for this question is connected with solvability  properties and we discuss this subject in the next few paragraphs. The first step consists in introducing the following:
\begin{defn}
	Let $\mathbb{E}_{j}$ be the space of distributions $v \in \mathcal{D}'(\mathbb{T}^{n+1})$ such that $\widehat{v}(\cdot, \xi) \in C^{\infty}(\mathbb{T})$, for each $\xi \in \mathbb{Z}^n$, and 
	$$
	\int_{0}^{2 \pi}\exp\left(i \int_{0}^{t}\mathcal{M}_j(r,\xi)dr \right) \widehat{v}(t, \xi)dt = 0, \ \textrm{if } \ \xi \in Z_{\mathcal{M}_j}.
	$$
\end{defn}

\begin{thm}\label{Th-Solv}
	Let $L_j$ be a globally hypoelliptic operator satisfying  \eqref{HormanderCond}. Thus, for each $v \in \mathbb{E}_{j}$, there exist $u \in \mathcal{D}'(\mathbb{T}^{n+1})$ such that $L_j u = v$.
\end{thm}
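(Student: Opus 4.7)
The plan is to construct $u$ through its partial Fourier series in $x$, by solving the family of ordinary differential equations arising for the Fourier coefficients. Applying the $x$-Fourier transform to $L_j u = v$ yields, for each $\xi \in \mathbb{Z}^n$,
\begin{equation*}
\partial_t \widehat{u}(t,\xi) + i\mathcal{M}_j(t,\xi)\widehat{u}(t,\xi) = \widehat{v}(t,\xi), \quad t\in\mathbb{T}.
\end{equation*}
Since $L_j$ is (GH), Theorem \ref{AGKM} gives that $L_{0,j}$ is (GH), the set $Z_{\mathcal{M}_j}$ is finite, and $|e^{2\pi i\mathcal{M}_{0,j}(\xi)}-1|^{-1}\leq C_1|\xi|^{N_1}$ for all large $|\xi|$.

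For $\xi \notin Z_{\mathcal{M}_j}$ I would take $\widehat{u}(t,\xi)$ to be the explicit $2\pi$-periodic solution given by \eqref{Sol-4},
\begin{equation*}
\widehat{u}(t,\xi) = \frac{1}{e^{2\pi i\mathcal{M}_{0,j}(\xi)}-1}\int_{0}^{2\pi}\exp\!\left(i\int_{t}^{t+s}\mathcal{M}_j(r,\xi)\,dr\right)\widehat{v}(t+s,\xi)\,ds.
\end{equation*}
For the finitely many $\xi \in Z_{\mathcal{M}_j}$, variation of parameters applied to the ODE produces a smooth periodic solution precisely under the compatibility condition
\begin{equation*}
\int_{0}^{2\pi}\exp\!\left(i\int_{0}^{t}\mathcal{M}_j(r,\xi)\,dr\right)\widehat{v}(t,\xi)\,dt = 0,
\end{equation*}
which is exactly the requirement built into the definition of $\mathbb{E}_j$; I would then fix any such $\widehat{u}(\cdot,\xi)\in C^\infty(\mathbb{T})$ for those finitely many indices.

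It remains to verify that $u(t,x):=\sum_{\xi\in\mathbb{Z}^n} \widehat{u}(t,\xi)e^{ix\cdot\xi}$ defines a distribution on $\mathbb{T}^{n+1}$, which by Lemma \ref{lemma-smooth} reduces to showing polynomial growth in $|\xi|$ of $\partial_t^\alpha\widehat{u}(t,\xi)$ for every $\alpha \in \mathbb{Z}_+$. Differentiating the integral formula in $t$ and applying Leibniz produces terms controlled by (i) the factor $|e^{2\pi i\mathcal{M}_{0,j}(\xi)}-1|^{-1} \leq C_1|\xi|^{N_1}$; (ii) powers of $|\mathcal{M}_j(t,\xi)| \leq C_2|\xi|^{\nu_j}$ generated when $\partial_t$ hits the exponential; (iii) derivatives of $\widehat{v}(\cdot,\xi)$, which grow polynomially in $|\xi|$ because $v$ is a distribution with smooth partial Fourier coefficients; and crucially (iv) the exponential itself, whose modulus $\exp\!\left(-\int_t^{t+s}\Im\mathcal{M}_j(r,\xi)\,dr\right)$ is bounded by $e^{2\pi\eta_j}$ uniformly in $t,s,\xi$ thanks to the H\"ormander hypothesis \eqref{HormanderCond}. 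Combining these gives $|\partial_t^\alpha\widehat{u}(t,\xi)|\leq C_\alpha|\xi|^{N_\alpha}$, so $u\in\mathcal{D}'(\mathbb{T}^{n+1})$ and $L_ju = v$ by construction. The main obstacle is step (iv): without \eqref{HormanderCond} the exponential factor could blow up super-polynomially in $\xi$ and destroy convergence of the series, mirroring exactly the role this hypothesis plays in the proof of Theorem \ref{Th-GH-H}.
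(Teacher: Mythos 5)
Your proposal is correct and follows essentially the same route as the paper's proof: apply the $x$-Fourier transform, take the explicit periodic solution \eqref{Sol-4} for $\xi \notin Z_{\mathcal{M}_j}$, use variation of parameters with the compatibility condition from $\mathbb{E}_j$ for the finitely many $\xi \in Z_{\mathcal{M}_j}$, and establish the polynomial growth bounds via the Siegel estimate, the symbol bound, and condition \eqref{HormanderCond} to control the exponential factor. The only cosmetic difference is that you describe the solution on $Z_{\mathcal{M}_j}$ abstractly while the paper writes it out as formula \eqref{Sol-6}, but since that set is finite and needs no estimates, the arguments coincide.
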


\begin{proof}
	By the hypothesis,  $L_{0,j}$ is globally hypoelliptic, $\mathcal{M}_{0, j}(\xi)$ satisfies  condition \eqref{Siegel} and the set  $Z_{\mathcal{M}_j}$ is finite. For $\xi \notin Z_{\mathcal{M}_j}$ it is possible to define
	$\widehat{u}(t, \xi)$ by expression \eqref{Sol-4},  while  in the case of $\xi \in Z_{\mathcal{M}_j}$ we set 
	\begin{equation}\label{Sol-6}
	\widehat{u}(t, \xi) = \exp\left( -i \int_{0}^{t} \mathcal{M}_j(r, \xi) dr \right) \int_{0}^{t}\exp\left(i \int_{0}^{s}\mathcal{M}_j(r,\xi)dr \right) \widehat{v}(s, \xi)ds.
	\end{equation}
	
	Since $v \in \mathbb{E}_{j}$, we obtain
	$\widehat{u}(\cdot, \xi) \in C^{\infty}(\mathbb{T})$ and
	$$
	\partial_t \widehat{u}(t, \xi) +    i{\mathcal{M}_j}(t, \xi) \widehat{u}(t,\xi) = \widehat{v}(t, \xi),  \ t \in \mathbb{T}, \ \xi \in \mathbb{Z}^n.
	$$
	
	Now, we shall prove that $\{\widehat{u}(t, \xi)\}_{\xi \in \mathbb{Z}^n}$ defines an element $u \in \mathcal{D}'(\mathbb{T}^{n+1})$. Firstly, since the set $Z_{\mathcal{M}_j}$ is finite, no estimates for \eqref{Sol-6} are needed. For the general case, an argument similar to the one in the proof of Theorem \ref{Th-GH-H}  shows that 
	\begin{align*}
	|\partial_t^{\alpha}  \widehat{u}(t, \xi)|  \leq 2 \pi C_1C_2C_3C_4|\xi|^{N_1 - N_2 + \nu_j + 2\pi \eta_j},
	\end{align*}
	where $C_3$ and $R_2$ now satisfy
	\begin{equation*}
	\sum_{\beta=0}^{\alpha}\binom{\alpha}{\beta} |\partial_{t}^{\alpha -\beta} \widehat{v}(t, \xi)| \leq C_3 |\xi|^{-N_2}, \ \forall |\xi|\geq R_2,
	\end{equation*}
	for some $N_2$. Then, it is possible to choose positive constants $C$, $N$ and $R$ such that
	\begin{equation*}
	|\partial_t^{\alpha}  \widehat{u}(t, \xi)| \leq C |\xi|^{-N}, \ t \in \mathbb{T}, \ |\xi|\geq R,
	\end{equation*}
	which implies  $u = \sum_{\xi \in \mathbb{Z}^{n}}{\widehat{u}(t, \xi) e^{i  x \cdot \xi}} \in \mathcal{D}'(\mathbb{T}^{n+1})$. Clearly, $L_ju = v$ and the proof is done.
	
\end{proof}

\begin{thm}\label{suf-non-hy}
	Admit that each $L_j$ satisfies  \eqref{HormanderCond}. If  some $L_k$ has a singular solution $u_k$ satisfying
	$$
	u_{k+j} = L_{k+j+1} u_{k+j+1}, \ j \in \{0, \ldots, m-k-1 \},
	$$
	then $L$ is not globally hypoelliptic. 
	
\end{thm}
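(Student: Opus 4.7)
The plan is to produce a distribution $v \in \mathcal{D}'(\mathbb{T}^{n+1}) \setminus C^{\infty}(\mathbb{T}^{n+1})$ satisfying $L v \in C^{\infty}(\mathbb{T}^{n+1})$, thereby violating global hypoellipticity. The natural candidate is $v = u_m$, the last element produced by the chain.

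First, I would iterate the chain hypothesis to telescope
$$u_k = L_{k+1} u_{k+1} = L_{k+1} L_{k+2} u_{k+2} = \cdots = L_{k+1} \circ L_{k+2} \circ \cdots \circ L_m \, u_m,$$
so that $L_k u_k = (L_k \circ L_{k+1} \circ \cdots \circ L_m) u_m$. Since $u_k$ is a singular solution of $L_k$, the left-hand side is smooth; and because the pseudo-differential operators $L_1, \ldots, L_{k-1}$ preserve $C^{\infty}(\mathbb{T}^{n+1})$, composing with $L_1 \circ \cdots \circ L_{k-1}$ (understood as the identity when $k=1$) yields $L u_m \in C^{\infty}(\mathbb{T}^{n+1})$.

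Next, I would verify that $u_m$ is genuinely not smooth by a backward induction along the chain. If on the contrary $u_m \in C^{\infty}(\mathbb{T}^{n+1})$, then $u_{m-1} = L_m u_m \in C^{\infty}(\mathbb{T}^{n+1})$, and iterating the identities $u_{k+j} = L_{k+j+1} u_{k+j+1}$ downwards forces $u_k \in C^{\infty}(\mathbb{T}^{n+1})$, contradicting the singularity of $u_k$. Hence $u_m$ is a singular distribution with $L u_m$ smooth, proving that $L$ fails to be globally hypoelliptic.

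The main subtlety, and the reason condition \eqref{HormanderCond} is kept in the hypotheses, does not lie in the deduction above, which is essentially formal once the chain $u_k, u_{k+1}, \ldots, u_m$ is given; it lies in the fact that at each level one must actually solve $u_{k+j} = L_{k+j+1} u_{k+j+1}$ in $\mathcal{D}'(\mathbb{T}^{n+1})$. This is precisely the solvability question settled by Theorem \ref{Th-Solv}, and \eqref{HormanderCond} is exactly what makes that theorem applicable to each $L_{k+j+1}$ when one wants to build such chains in concrete examples.
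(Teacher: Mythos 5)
Your proof is correct and follows the same core idea as the paper's: exhibit $u_m$ as a singular distribution with $Lu_m$ smooth. The telescoping of the chain to get $L u_m = L_1 \circ \cdots \circ L_{k-1}(L_k u_k) \in C^\infty$, and the backward induction (using that each $L_j$ maps $C^\infty$ to $C^\infty$) to conclude $u_m \notin C^\infty$, are both exactly what is needed. Two small points of comparison. First, the paper only writes out the case $m=2$, $k=1$ (saying ``it is enough''), whereas you handle the general chain directly; yours is therefore the more complete write-up. Second, and more substantively, the paper's proof reads ``Since $u_1 \in \mathbb{E}_2$ we obtain $u_2$ such that $L_2u_2 = u_1$,'' i.e.\ it invokes Theorem~\ref{Th-Solv} to \emph{produce} $u_2$ even though, under a literal reading of the statement, the existence of the whole chain $u_{k+1}, \ldots, u_m$ is already hypothesized. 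You read the hypothesis literally, so the chain is given and Theorem~\ref{Th-Solv} is not needed in the argument itself; your closing remark correctly locates condition~\eqref{HormanderCond} as the ingredient that makes such chains \emph{constructible} via Theorem~\ref{Th-Solv}, rather than as something used in the deduction. That is a fair and sharper reading of the theorem: as you note, once the chain is assumed, the H\"ormander condition plays no logical role in the proof, a point the paper's phrasing leaves slightly blurred.
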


\begin{proof}
	It is enough to consider the case $m=2$ and $k=1$. By hypothesis, we have $L_1 u_1 = f \in C^{\infty}(\mathbb{T}^{n+1})$ with $u_1 \notin C^{\infty}(\mathbb{T}^{n+1})$. 
	Since $u_1 \in \mathbb{E}_2$ we obtain $u_2 \in \mathcal{D}'(\mathbb{T}^{n+1})$ such that $L_2 u_2 = u_1$. Clearly $u_2 \notin C^{\infty}(\mathbb{T}^{n+1})$ and $f = L_1 \circ L_2 (u_2)$, hence the proposition is proved.
	
\end{proof}

\bibliography{references}

\end{document}